\def\FIGDIR{./}
\newcommand{\ds}{\displaystyle}
\newcommand{\RR}{\mathbb{R}}
\newcommand{\NN}{\mathbb{N}}
\newcommand{\VV}{\mathbb{V}}
\def\UU{{\mathbb U}}
\def\XX{{\mathbb X}}
\def\SCS{\VV^0}
\def\obj{{\mathbb D}} 
\def\dynamics{f}
\def\one{y}
\def\two{z}
\def\controlONE{v}
\def\controlTWO{w}
\def\dynamicsTWO{R_{\two}}
\def\dynamicsONE{R_{\one}}
\def\stockONE{B_\one} 
\def\stockTWO{B_\two} 
\def\catchONE{C_\one} 
\def\catchTWO{C_\two}
\def\tonnes{\texttt{t}}
\newcommand{\ov}{\operatornamewithlimits}
\def\mtext#1{\,\mbox{#1}\,} 
\def\defegal{:=}
\def\llower{^{\flat}}
\def\lloweropt{^{\flat,\star}}
\def\biomass{B}
\def\biology{\mathtt{Biol}}
\def\catch{h}
\def\equil{_{\textsc{e}}}
\def\SY{\mathop{\mathtt{Sust}}}
\def\MSY{\textsc{msy}}
\newtheorem{theorem}{Theorem}
\newtheorem{proposition}[theorem]{Proposition}
\newtheorem{corollary}[theorem]{Corollary}
\newtheorem{definition}[theorem]{Definition}
\newenvironment{proof}{\small{\bf Proof.}}{\hfill$\Box$\normalsize
\bigskip}
\newcounter{Appctr}
\title{Ecosystem Viable Yields}
\author{
Michel \textsc{De Lara}\footnote{%
Universit\'e Paris--Est, \textsc{CERMICS}, 
 6--8 avenue Blaise Pascal, 77455 Marne la Vall\'ee Cedex 2, France. 
Corresponding author:  delara@cermics.enpc.fr, fax +33164153586}
\and 
Eladio \textsc{Oca\~na} \footnote{\textsc{IMCA-FC}, Universidad
    Nacional de Ingenier\'{\i}a, Calle los Bi\'ologos 245, Lima
    12-Per\'u. eocana@imca.edu.pe, 
fax +511349-9838}
\and
Ricardo \textsc{Oliveros-Ramos} \thanks{\textsc{IMARPE}, 
Instituto del Mar del Per\'u,
Centro de Investigaciones en Modelado Oceanogr\'afico y Biol\'ogico Pesquero (CIMOBP), Apartado 22, Callao--Per\'u.
jtam@imarpe.gob.pe, roliveros@imarpe.gob.pe, fax +5114535053} 
\and
Jorge \textsc{Tam} \footnotemark[3]
}
\begin{document}
\maketitle

\begin{abstract}
The World Summit on Sustainable Development (Johannesburg, 2002)
encouraged the application of the ecosystem approach by 2010.
However, at the same Summit, the signatory States undertook to restore and exploit their stocks at maximum sustainable yield (MSY), a concept and practice without ecosystemic dimension, since MSY is computed species by species, on the basis of a monospecific model.
Acknowledging this gap, we propose a definition of 
``ecosystem viable yields'' (EVY)
as yields compatible i) with guaranteed biological safety levels for all time
and ii) with an ecosystem dynamics.
To the difference of MSY, this notion is not based on equilibrium, but on viability theory, which offers advantages for robustness.
For a generic class of multispecies models with harvesting, we provide explicit expressions for the EVY.
We apply our approach to the anchovy--hake couple in the Peruvian
upwelling ecosystem. 
\medskip

\noindent{\bf Key words:} control theory; state constraints; viability; yields; 
ecosystem management; Peruvian upwelling ecosystem.
\end{abstract}


\section{Introduction}
\label{sec:Introduction}

Following the World Summit on Sustainable Development 
(Johannesburg, 2002), 
the signatory States undertook to restore and exploit their stocks at 
maximum sustainable yield (MSY, see \citep*{Clark:1990}).
Though being criticized for decades, MSY remains a reference.
Criticisms of MSY, like \citep*{Larkin:1977}, point out that MSY relies upon a single variable stock description (the species biomass), without age structure nor interactions with other species; 
what is more, computations are made at equilibrium.
In fisheries, one of the more elaborate method of fixing quotas, the ICES
(International Council for the Exploration of the Sea)  precautionary approach \citep*{ICES:2004}, does not assume equilibrium 
(it projects abundances one year ahead) and assumes age structure;
it remains however based on a monospecific dynamical model.
Thus, in fisheries, yields are usually defined species by species.

On the other hand, more and more emphasis is put on multispecies models
\citep*{Hollowed-Bax-Beamish-Collie-Fogarty-Livingston-Pope-Rice:2000} 
and on ecosystem management. For instance, the World Summit on Sustainable Development
encouraged the application of the ecosystem approach by 2010.
Also, sustainability is a major goal of international agreements and
guidelines to fisheries management \citep*{FAO:1999,ICES:2004}. 

Our interest is in providing conceptual insight as what could be 
\emph{sustainable yields for ecosystems}.
In this, we follow the vein of \citep*{Katz-Zabel-Harvey-Good-Levin:2003} which
introduces the concept of \emph{Ecologically Sustainable Yield}
(ESY), 
or of \citep*{Chapel-Deffuant-Martin-Mullon:2008} which 
defines yield policies in a viability approach. 
A general discussion on the ecosystem approach to fisheries may be found in
\citep*{Garcia-Zerbi-Aliaume-DoChi-Lasserre:2003}.

Our emphasis is on providing formal definition and
practical methods to design and compute such yields. 
For this purpose, our approach is not based on equilibrium calculus, 
nor on intertemporal discounted utility maximization
but on the so-called viability theory, as follows.

On the one hand, the ecosystem is described by a dynamical model
controlled by harvesting.
On the other hand, building upon \citep*{Bene-Doyen-Gabay:2001},
constraints are imposed:
catches are expected to be above given production minimal levels,
and biomasses above safety biological minimal levels.
Sustainability is here defined as the property that such constraints can be maintained for all time by appropriate harvesting strategies.

Such problems of dynamic control under constraints refer to viability 
\citep*{Aubin:1991} or invariance \citep*{Clarkeetal:1995} frameworks,
as well as to reachability of target sets or tubes for nonlinear discrete time
dynamics in~\citep*{bertsekas71minimax}.

We consider sustainable management issues formulated within
such framework  as in
\citep*{Bene-Doyen-Gabay:2001,Doyen-Bene:2003,Eisenack-Sheffran-Kropp:2006,Mullonetal:2004,Rapaport-Terreaux-Doyen:2006,DeLara-Doyen-Guilbaud-Rochet_IJMS:2007,DeLara-Doyen:2008,Chapel-Deffuant-Martin-Mullon:2008}.

A viable state is an initial condition for the
ecosystem dynamical system such that appropriate harvesting rules may drive 
the system on a sustainable path by maintaining catches and
biomasses above their respective production and biological minimal levels. 
We provide a way to characterize production minimal levels (yields) 
such that the present initial conditions are a viable state.
These yields are sustainable in the sense that they
can be indefinitely guaranteed, while making possible that the ecosystem remains in an ecologically viable zone;
we coin them \emph{ecosystem viable yields} (EVY).

Thus, the EVY can be seen as an extension of the MSY concept in two directions: 1) from equilibrium to viability (more robust);
2) from monospecies to multispecies models. 
The second claim is obvious because, as we have recalled at the beginning, 
the MSY relies upon a single variable stock description (the species biomass), without interactions with other species.
As for the first, recall that the MSY is the largest constant yield that can be taken from a single species stock over an indefinite period. By contrast, EVY are \emph{guaranteed} yields, but they are not necessarily the annual catches. Indeed, it is by an \emph{adaptive} catch policy (depending on the states of the stocks) that we shall be able to display yields  \emph{indefinitely above} the EVY. This is why we say that EVY are {guaranteed} yields, in the sense that catches cannot fall below the EVY. Viability can be seen as a robust 
extension of equilibrium: yields are not supposed to be sustained by
applying fixed stationary catches, but are minimal levels which can be
guaranteed by means of {adaptive} catch policies. 
\medskip

The paper is organized as follows.
In Section~\ref{sec:Ecosystem_sustainable_yields},
we introduce generic harvested nonlinear ecosystem models, 
and we present how preservation and production constraints are
modelled. 
Thanks to an explicit description of viable states, 
we are able to characterize sustainable yields.
These latter are not defined species by species, 
but depend on the whole ecosystem dynamics 
and on all biological minimal levels.
In Section~\ref{sec:A_numerical_application_to_the_hake-anchovy_Peruvian_ecosystem}, an illustration in ecosystem management and numerical applications are 
given for the hake--anchovy couple in the Peruvian upwelling ecosystem 
between the years 1971 and 1981.
We conclude in Section~\ref{sec:Conclusion} with possible extensions of the notion of ecosystem viable yields, on the one hand, to more general ecosystem models and, on the other hand, to bio-economic models so as to incorporate 
some economic considerations. We also discuss the limits of the EVY concept.
In the appendix,
Section~\ref{sec:Viability_and_sustainable_management} is devoted to
recalls on discrete--time viability and its 
possible use for sustainable management, while 
Section~\ref{sec:Viable_control_of_generic_nonlinear_ecosystem_models}
contains the mathematical proofs.

\section{Ecosystem viable yields}
\label{sec:Ecosystem_sustainable_yields}

After a brief recall on the notion of maximum sustainable yield (for monospecific models), 
we introduce a class of generic harvested nonlinear ecosystem models, then 
present how to define maximum sustainable yields for this class.
Next, we provide an explicit description of viable states, for which production and biological constraints can be guaranteed for all times under appropriate management.
This makes possible to define \emph{ecosystem viable yields}, compatible with biological and conservation constraints. 
We end up discussing relations between ecosystem viable yields and maximum sustainable yields.

\subsection{A brief recall on maximum sustainable yield}

We briefly sketch the principles leading to the notion of
\emph{maximum sustainable yield} (MSY) (see \citep*{Clark:1990} in continuous time and
\citep*{DeLara-Doyen:2008} in discrete time). 

Consider a single population described by its total biomass $\biomass(t)$ 
at time $t$.
Suppose that the time evolution of the biomass is given by a dynamical 
equation, a differential equation 
\( \dot{\biomass}(t)=\biology_c\big(\biomass(t)\big) \) in continuous time 
or a difference equation \( \biomass(t+1)=\biology_d\big(\biomass(t)\big) \)
in discrete time.
From this, build a Schaefer model \citep*{Schaefer:1954} by substracting
a catch term $\catch(t)$,
giving \( \dot{\biomass}(t)=\biology_c\big(\biomass(t)\big) - \catch(t) \)
or \( \biomass(t+1)= \biology_d\big(\biomass(t) - \catch(t) \big) \).
In general, to each biomass level \( \biomass\equil \) (below the carrying
capacity) corresponds a catch level \( \catch\equil=\SY(\biomass\equil) \) 
for which the biomass \( \biomass\equil \) is at equilibrium, solution of
\( \biology_c\big(\biomass\equil\big) - \catch\equil = 0 \) or
\( \biology_d\big(\biomass\equil - \catch\equil \big) = \biomass\equil \).
The \emph{maximum sustainable yield} is the largest of such
equilibrium catches: \( \MSY=\max_{\biomass\equil}\SY(\biomass\equil) \).






\subsection{Ecosystem biomass dynamical model}

For simplicity, we consider a model with two species, but it can 
be easiliy extended to $N$ species in interaction.
Each species is described by its biomass:
the two--dimensional state vector $(\one,\two)$ represents the biomasses
of both species. 
The two--dimensional control $(\controlONE,\controlTWO)$ 
comprises the harvesting effort for each species, 
respectively. The catches are thus $\controlONE \one$ and 
$\controlTWO \two$ (measured in
biomass).\footnote{%
In fact, any expression of the form $c(\one,\controlONE)$,
instead of $\controlONE \one$, would fit for
the catches in the following Proposition~\ref{pr:predator_prey} as
soon as $\controlONE \mapsto c(\one,\controlONE)$ 
is strictly increasing and goes
from $0$ to $+\infty$ when $\controlONE$ goes from $0$ to $+\infty$.
The same holds for $d(\two,\controlTWO)$ instead of $\controlTWO \two$.
}
The discrete--time control system  we consider is 
\begin{equation}
\left\{ \begin{array}{rcl}
 \one(t+1) &=& \one(t) \dynamicsONE\big(\one(t),\two(t),\controlONE(t)\big) \; , \\
 \two(t+1) &=& \two(t) \dynamicsTWO\big(\one(t),\two(t),\controlTWO(t)\big) \; ,
\end{array} \right.
\label{eq:ecosystem}
\end{equation}
where $t$ stand for time (typically, periods are years),
and where $\dynamicsONE:\RR^3 \to \RR$ and $\dynamicsTWO:\RR^3 \to \RR$ are two
functions representing growth factors
(the growth rates being $\dynamicsONE -1$ and $\dynamicsTWO -1 $). 
This model is generic in that no explicit or analytic assumptions are made on how the growth factors $\dynamicsONE$ and $\dynamicsTWO$ indeed depend
upon both biomasses $(\one,\two)$.

In the above model, each species is harvested by a
specific device: one species, one harvesting effort. This covers the 
multioutput settings case (e.g. several species in trophic interactions and targeted by the same fishing gear). Indeed, for this it suffices to state that both efforts are identical: 
$\controlONE(t) = \controlTWO(t) $ for all \( t=t_{0},t_{0}+1,\ldots \).


\subsection{Preservation and production sustainability}

We now propose to define sustainability as the ability to respect
preservation and production minimal levels for all times, building upon the original approach of \citep*{Bene-Doyen-Gabay:2001}.
Let us be given
\begin{itemize}
 \item on the one hand, 
\emph{minimal biomass levels} 
$\stockONE\llower\geq 0$, $\stockTWO\llower\geq 0$, 
one for each species, 
 \item on the other hand, \emph{minimal catch levels} 
$\catchONE\llower\geq 0$, $\catchTWO\llower\geq 0$,
one for each species.
\end{itemize}

A couple $(\one_0,\two_0)$ of initial biomasses is said to be a \emph{viable state}
if there exist appropriate harvesting efforts (controls)
$\big(\controlONE(t),\controlTWO(t)\big), \;
t=t_{0},t_{0}+1,\ldots$ such that the state path 
$\big(\one(t),\two(t)\big), \; t=t_{0},t_{0}+1,\ldots$ 
starting from $\big(\one(t_0),\two(t_0)\big)=(\one_0,\two_0)$ 
satisfies the following goals:
\begin{itemize}
\item {preservation} (minimal biomass levels)
\begin{equation}
\mtext{biomasses: } 
{ \one(t) \geq \stockONE\llower \; , \quad \two(t) \geq \stockTWO\llower }
\; , \quad \forall t=t_{0},t_{0}+1,\ldots
\label{eq:guaranteed_biomass}
\end{equation}
 \item and {production} requirements 
(minimal catch levels)
\begin{equation}
\mtext{catches: } 
{
\controlONE(t) \one(t) \geq \catchONE\llower \; , \quad
\controlTWO(t) \two(t) \geq \catchTWO\llower 
\; , \quad \forall t=t_{0},t_{0}+1,\ldots 
}
\label{eq:guaranteed_catch}
\end{equation}
\end{itemize}
The set of all viable states is called the \emph{viability kernel} 
\citep*{Aubin:1991}. 
Characterizing viable states 
makes it possible to test whether or not minimal biomasses and catches
can be guaranteed for all time. 

Here, by \emph{guaranteed}, we mean that the yields have to \emph{indefinitely above} the EVY, as reflected in the inequalities~\eqref{eq:guaranteed_biomass} and  \eqref{eq:guaranteed_catch}. We insist on the fact that, in the above definition of viable states, we say ``there exist appropriate harvesting efforts (controls)'': that is, to guarantee the EVY, we need to resort to  \emph{adaptive} catch policies (depending on the states of the stocks
as in Corollary~\ref {cor:viable-contr} in the Appendix).
Viability can be seen as a robust extension of equilibrium: 
yields are not supposed to be sustained by
applying fixed stationary catches, but are minimal levels which can be
guaranteed by means of {adaptive} catch policies. 

Notice that, in the multioutput settings case, we need to add the constraint 
$\controlONE(t) = \controlTWO(t) $ for all \( t=t_{0},t_{0}+1,\ldots \).
Therefore, with an additional constraint, the set of viable states in the multioutput settings case will be smaller than the one considered above.

The following definition summarizes useful and natural properties 
required for the growth factors in the ecosystem model.
\begin{definition}
We say that growth factors $\dynamicsONE$ and
$\dynamicsTWO$ in the ecosystem model~\eqref{eq:ecosystem} 
are \emph{nice} if 
the function $\dynamicsONE:\RR^3 \to \RR$ is 
continuously decreasing\footnote{%
In all that follows, a mapping $\varphi: \RR \to \RR$ is said to be
increasing if $x \geq x' \Rightarrow \varphi(x)  \geq \varphi(x')$. 
The reverse holds for decreasing. Thus, with
this definition, a constant mapping is both increasing and decreasing.
} in the harvesting effort $\controlONE$  
and satisfies $\lim_{\controlONE \to +\infty}
\dynamicsONE(\one,\two,\controlONE)\leq 0$,
and if $\dynamicsTWO:\RR^3 \to \RR$ is 
continuously decreasing in the harvesting effort $\controlTWO$,
and satisfies $\lim_{\controlTWO \to +\infty}
\dynamicsTWO(\one,\two,\controlTWO)\leq 0$. 
\end{definition}

The following Proposition~\ref{pr:predator_prey} gives an explicit
description of the viable states, 
under some conditions on the minimal levels. 
Its proof is given in \S~\ref{ss:Expression_of_the_viability_kernel} in the Appendix.

The Proposition~\ref{pr:predator_prey} may easily be extended to $N$ species in interaction as long as  each species is harvested by a
specific device: one species, one harvesting effort.
However, it is not valid in the multioutput settings case. 
Indeed, it is crucial to have two distinct controls 
$\controlONE(t)$ and $\controlTWO(t) $ for the proof.
Assuming that 
$\controlONE(t) = \controlTWO(t) $ for all \( t=t_{0},t_{0}+1,\ldots \)
would require other types of calculations for the viability kernel. This is out of the scope of this paper.

\begin{proposition}
Assume that the growth factors in the ecosystem model~\eqref{eq:ecosystem} are nice.
If the biomass minimal levels $\stockONE\llower $, $\stockTWO\llower $,
and the catch minimal levels $\catchONE\llower $, $\catchTWO\llower $
are such that the following growth factors values are greater than one
\begin{equation}
\dynamicsONE(\stockONE\llower,\stockTWO\llower,
\frac{\catchONE\llower}{\stockONE\llower})\geq 1
\mtext{ and } 
\dynamicsTWO(\stockONE\llower,\stockTWO\llower,
\frac{\catchTWO\llower}{\stockTWO\llower})\geq 1 \; ,
\label{eq:favorable_conditions}
\end{equation}
then the viable states are all the couples $ (\one,\two)$  of biomasses 
such that
\begin{equation}
\one\geq \stockONE\llower, \quad   \two\geq \stockTWO\llower, \quad   
\one\dynamicsONE(\one,\two,\frac{\catchONE\llower}{\one})\geq \stockONE\llower, \quad 
\two\dynamicsTWO(\one,\two,\frac{\catchTWO\llower}{\two})\geq \stockTWO\llower 
\; . 
\end{equation}
\label{pr:predator_prey}
\end{proposition}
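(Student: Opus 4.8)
The plan is to prove the claimed identity by establishing two inclusions between the viability kernel, which we denote $\VV$, and the explicit set $\VV\uupper$ appearing on the right-hand side of the statement, namely
\[
\VV\uupper = \Big\{ (\one,\two) : \one\geq \stockONE\llower,\ \two\geq \stockTWO\llower,\ \one\dynamicsONE(\one,\two,\tfrac{\catchONE\llower}{\one})\geq \stockONE\llower,\ \two\dynamicsTWO(\one,\two,\tfrac{\catchTWO\llower}{\two})\geq \stockTWO\llower \Big\}.
\]
The structural fact driving everything is that the two efforts act separately: $\controlONE$ enters only the update of $\one$ and $\controlTWO$ only that of $\two$. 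Since $\dynamicsONE(\one,\two,\cdot)$ is continuous and decreasing with limit $\leq 0$, as $\controlONE$ ranges over the admissible set $\{\controlONE \geq \catchONE\llower/\one\}$ imposed by the catch constraint~\eqref{eq:guaranteed_catch}, the attainable next biomass $\one\,\dynamicsONE(\one,\two,\controlONE)$ describes an interval whose maximum $\one\,\dynamicsONE(\one,\two,\catchONE\llower/\one)$ is reached at the smallest admissible effort $\controlONE=\catchONE\llower/\one$; the analogous statement holds for $\two$. This decoupling is exactly why two distinct controls are needed.

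For the inclusion $\VV\subseteq\VV\uupper$ (necessity) I would take a viable state $(\one_0,\two_0)$ together with a trajectory meeting all constraints and read off only the first two instants. The biomass constraint~\eqref{eq:guaranteed_biomass} at $t_0$ gives $\one_0\geq\stockONE\llower$ and $\two_0\geq\stockTWO\llower$. The catch constraint~\eqref{eq:guaranteed_catch} at $t_0$ yields $\controlONE(t_0)\geq\catchONE\llower/\one_0$, so by monotonicity of $\dynamicsONE$ the next biomass satisfies $\one(t_0+1)\leq\one_0\dynamicsONE(\one_0,\two_0,\catchONE\llower/\one_0)$; combined with the biomass constraint $\one(t_0+1)\geq\stockONE\llower$ at time $t_0+1$, this gives the third defining inequality of $\VV\uupper$, and the fourth follows symmetrically. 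Hence $(\one_0,\two_0)\in\VV\uupper$.

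For the inclusion $\VV\uupper\subseteq\VV$ (sufficiency), where the hypothesis~\eqref{eq:favorable_conditions} and the real work lie, I would show that $\VV\uupper$ is a viability domain, i.e. that from every point of $\VV\uupper$ some admissible control keeps the state inside $\VV\uupper$ at the next step; the maximality characterization of the viability kernel recalled in the Appendix then gives $\VV\uupper\subseteq\VV$. The idea is to steer, in one step, exactly to the corner $(\stockONE\llower,\stockTWO\llower)$. That corner belongs to $\VV\uupper$ \emph{precisely} because the favorable conditions~\eqref{eq:favorable_conditions} are equivalent to $\stockONE\llower\dynamicsONE(\stockONE\llower,\stockTWO\llower,\catchONE\llower/\stockONE\llower)\geq\stockONE\llower$ and its analogue, so all four defining inequalities hold there. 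For reachability, fix $(\one,\two)\in\VV\uupper$: the inequality $\one\dynamicsONE(\one,\two,\catchONE\llower/\one)\geq\stockONE\llower$ says that $\stockONE\llower$ does not exceed the maximal attainable next biomass, so by the interval property above and the intermediate value theorem there is an effort $\controlONE^\star$ with $\one\dynamicsONE(\one,\two,\controlONE^\star)=\stockONE\llower$; monotonicity forces $\controlONE^\star\geq\catchONE\llower/\one$, so the catch constraint holds, and independently a $\controlTWO^\star$ drives $\two$ to $\stockTWO\llower$. Applying these two controls lands the state at $(\stockONE\llower,\stockTWO\llower)\in\VV\uupper$, proving that $\VV\uupper$ is a viability domain.

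The main obstacle is the sufficiency step. Its delicate points are, first, recognizing that the natural target is the minimal-biomass corner $(\stockONE\llower,\stockTWO\llower)$ and that condition~\eqref{eq:favorable_conditions} is exactly what places this corner in $\VV\uupper$, so that it can be held indefinitely; and second, the simultaneous reachability of both target coordinates under both catch constraints, which relies essentially on the decoupling of $\controlONE$ and $\controlTWO$. This is precisely the feature that breaks down under the multioutput constraint $\controlONE=\controlTWO$, consistent with the remark preceding the Proposition. The necessity step, by contrast, is a short monotonicity argument using only two consecutive instants.
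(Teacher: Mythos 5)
Your proof is correct and is essentially the paper's own argument: the paper computes the viability kernels up to times $0$, $1$, $2$ via the backward recursion~\eqref{eq:induction}, identifies your set $\VV\uupper$ as $\VV_1$, and proves $\VV_1\subseteq\VV_2$ by the very device you use --- steering both species in one step to the corner $(\stockONE\llower,\stockTWO\llower)$ via the intermediate value theorem, with conditions~\eqref{eq:favorable_conditions} guaranteeing that this corner satisfies all four defining inequalities --- before concluding by stationarity of the sequence $(\VV_k)$. Your packaging (a direct necessity argument reading the constraints at two consecutive instants, plus a viability-domain and maximality argument for sufficiency) is an equivalent rewording rather than a different route, since by the recursion~\eqref{eq:induction} the inclusion $\VV_1\subseteq\VV_2$ is precisely the statement that $\VV_1$ is a viability domain, and the stationarity property the paper invokes is itself justified by the maximality of the kernel among viability domains.
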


Let us comment the assumptions of Proposition~\ref{pr:predator_prey}.
That the growth factors are decreasing with respect to the
harvesting effort is a natural assumption.
Conditions~\eqref{eq:favorable_conditions} mean that, at the point 
$(\stockONE\llower,\stockTWO\llower)$ and applying efforts 
$u\llower=\frac{\catchONE\llower}{\stockONE\llower}$,
$v\llower=\frac{\catchTWO\llower}{\stockTWO\llower}$,
the growth factors are greater than one, hence both populations grow;
hence, it could be thought that computing 
viable states 
is useless since everything looks fine. 
However, if all is fine at the point $(\stockONE\llower,\stockTWO\llower)$, 
it is not obvious that this also goes for a larger domain.
Indeed, the ecosystem dynamics given by~\eqref{eq:ecosystem}
has no  monotonocity properties that would allow to extend a result 
valid for a point to a whole domain.
What is more, if continuous-time viability results mostly relies upon
assumptions at the frontier of the constraints set, this is no longer
true for discrete-time viability.


\subsection{Ecosystem viable yields}

Considering that minimal biomass conservation levels 
are given first (for prominent biological issues), 
we shall now examine conditions for the existence of minimal catch levels 

First, we define (when they exist) the \emph{ecosystem viable yields}.

\begin{definition}
Let biomass conservation minimal levels 
$\stockONE\llower\geq 0$, $\stockTWO\llower\geq 0$ be given.
Suppose that the growth factors in the ecosystem model~\eqref{eq:ecosystem} are nice, and that they take values greater than one in the absence of harvesting, namely:
\begin{equation}
\dynamicsONE(\stockONE\llower,\stockTWO\llower,0)\geq 1 
\mtext{ and }
\dynamicsTWO(\stockONE\llower,\stockTWO\llower,0)\geq 1 \; .
\label{eq:favorable_conditions_0}
\end{equation}
Define equilibrium catches as the largest nonnegative\footnote{%
Such catches are nonnegative because the growth factors in the ecosystem model~\eqref{eq:ecosystem} are nice, hence continuously decreasing in the harvesting effort, and by~\eqref{eq:favorable_conditions_0}.}
catches 
\( \catchONE\lloweropt , \catchTWO\lloweropt \) such that 
\begin{equation}
\dynamicsONE(\stockONE\llower,\stockTWO\llower,
\frac{\catchONE\lloweropt}{\stockONE\llower} ) = 1 
\mtext{ and }
\dynamicsTWO(\stockONE\llower,\stockTWO\llower,
\frac{\catchTWO\lloweropt}{\stockTWO\llower} ) = 1 \; .
\end{equation}
For a couple $(\one_0,\two_0)$ of biomasses, define (when they exist) the \emph{ecosystem viable yields} (EVY)
\( \catchONE\lloweropt(\one_0,\two_0) \) 
and \( \catchTWO\lloweropt(\one_0,\two_0) \) by
\begin{equation}
\left\{ \begin{array}{rcl}
\catchONE\lloweropt(\one_0,\two_0) &\defegal & 
\max \{ \catchONE \in [ 0 , \catchONE\lloweropt ] \mid 
\one_0\dynamicsONE(\one_0,\two_0,\frac{\catchONE}{\one_0})\geq
\stockONE\llower \} \; , \\[3mm]
\catchTWO\lloweropt(\one_0,\two_0) &\defegal & 
\max \{ \catchTWO  \in [ 0 , \catchTWO\lloweropt ] \mid
\two_0\dynamicsTWO(\one_0,\two_0,\frac{\catchTWO}{\two_0})\geq 
\stockTWO\llower \} \; .
\end{array} \right.
\label{eq:state_maximal_catches}
\end{equation}
\end{definition}

The term \emph{ecosystem viable yields} is justified by the following 
Proposition~\ref{pr:favorable_conditions_equilibrium}. 

\begin{proposition}
Assume that the growth factors in the ecosystem model~\eqref{eq:ecosystem} are nice.
For a couple $(\one_0,\two_0)$ of biomasses above preservation minimal levels -- that is, $ \one_0 \geq \stockONE\llower$ and 
$\two_0 \geq \stockTWO\llower$ -- and satisfying 
\begin{equation}
 \one_0\dynamicsONE(\one_0,\two_0,0)\geq \stockONE\llower 
\mtext{ and }
\two_0\dynamicsTWO(\one_0,\two_0,0)\geq \stockTWO\llower \; ,
\label{eq:conditions_initial_point}
\end{equation}
the {ecosystem viable yields} $\catchONE\lloweropt(\one_0,\two_0) $ and 
$\catchTWO\lloweropt(\one_0,\two_0)$
in~\eqref{eq:state_maximal_catches} are well defined.

What is more, consider catches
$\catchONE\llower$ and $\catchTWO\llower$ lower than these 
{ecosystem viable yields}, that is, 
$0 \leq \catchONE\llower \leq \catchONE\lloweropt(\one_0,\two_0) $
and $0 \leq \catchTWO\llower \leq \catchTWO\lloweropt(\one_0,\two_0)$.
Then, starting from the initial biomasses 
\( (\one(t_0),\two(t_0)) = (\one_0,\two_0) \),
there exists appropriate harvesting paths
which provide, for all time, at least the \emph{sustainable yields}
$\catchONE\llower$ and $\catchTWO\llower$ and which guarantee 
that biomass conservation minimal levels 
$\stockONE\llower\geq 0$, $\stockTWO\llower\geq 0$ are respected 
for all time.
\label{pr:favorable_conditions_equilibrium}
\end{proposition}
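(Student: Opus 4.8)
The plan is to reduce the statement to a direct application of Proposition~\ref{pr:predator_prey}, taking as minimal catch levels the chosen $\catchONE\llower$ and $\catchTWO\llower$. The only genuine content is, first, a compactness argument ensuring the maxima in~\eqref{eq:state_maximal_catches} are attained, and second, the verification that the two inputs of Proposition~\ref{pr:predator_prey} --- the favorable growth conditions~\eqref{eq:favorable_conditions} and membership of $(\one_0,\two_0)$ in the viability kernel --- follow from the definition of the ecosystem viable yields together with the monotonicity of the nice growth factors in the effort. Both verifications rest on a single principle: since a nice growth factor is decreasing in the effort, lowering the catch (hence the effort) can only raise it.

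First I would establish that the equilibrium catches $\catchONE\lloweropt$ and $\catchTWO\lloweropt$ of the Definition exist. Being nice, $\controlONE \mapsto \dynamicsONE(\stockONE\llower,\stockTWO\llower,\controlONE)$ is continuous and decreasing, is $\geq 1$ at $\controlONE=0$ by the standing hypothesis~\eqref{eq:favorable_conditions_0}, and has limit $\leq 0$ at $+\infty$; the intermediate value theorem then yields a largest effort at which it equals one, and $\catchONE\lloweropt$ is $\stockONE\llower$ times that effort. Next, for well-definedness in~\eqref{eq:state_maximal_catches}, I would observe that the feasible set $\{\catchONE\in[0,\catchONE\lloweropt]\mid \one_0\dynamicsONE(\one_0,\two_0,\catchONE/\one_0)\geq\stockONE\llower\}$ is nonempty --- it contains $\catchONE=0$, precisely by hypothesis~\eqref{eq:conditions_initial_point} --- and is closed by continuity of $\dynamicsONE$; being a closed subset of the compact interval $[0,\catchONE\lloweropt]$, it is compact, so the maximum defining $\catchONE\lloweropt(\one_0,\two_0)$ is attained. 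The same argument applies verbatim to $\catchTWO\lloweropt(\one_0,\two_0)$.

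For the viability claim, I would fix $\catchONE\llower$ and $\catchTWO\llower$ with $0\leq\catchONE\llower\leq\catchONE\lloweropt(\one_0,\two_0)$ and $0\leq\catchTWO\llower\leq\catchTWO\lloweropt(\one_0,\two_0)$, then verify the two inputs of Proposition~\ref{pr:predator_prey}. For~\eqref{eq:favorable_conditions}: since the ecosystem viable yield is a maximum over $[0,\catchONE\lloweropt]$, the chain $\catchONE\llower\leq\catchONE\lloweropt(\one_0,\two_0)\leq\catchONE\lloweropt$ gives $\catchONE\llower/\stockONE\llower\leq\catchONE\lloweropt/\stockONE\llower$, so monotonicity of $\dynamicsONE$ in the effort together with $\dynamicsONE(\stockONE\llower,\stockTWO\llower,\catchONE\lloweropt/\stockONE\llower)=1$ yields $\dynamicsONE(\stockONE\llower,\stockTWO\llower,\catchONE\llower/\stockONE\llower)\geq 1$, and symmetrically for $\dynamicsTWO$. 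For kernel membership: $\one_0\geq\stockONE\llower$ and $\two_0\geq\stockTWO\llower$ hold by assumption, while $\catchONE\llower\leq\catchONE\lloweropt(\one_0,\two_0)$, the fact that the maximizer in~\eqref{eq:state_maximal_catches} satisfies its own defining inequality, and monotonicity of $\dynamicsONE$ give $\one_0\dynamicsONE(\one_0,\two_0,\catchONE\llower/\one_0)\geq\one_0\dynamicsONE(\one_0,\two_0,\catchONE\lloweropt(\one_0,\two_0)/\one_0)\geq\stockONE\llower$, and likewise for $\two_0$. Proposition~\ref{pr:predator_prey} then places $(\one_0,\two_0)$ in the viability kernel associated with the levels $\stockONE\llower,\stockTWO\llower,\catchONE\llower,\catchTWO\llower$, which is exactly the assertion that harvesting paths exist guaranteeing catches at least $\catchONE\llower,\catchTWO\llower$ and biomasses at least $\stockONE\llower,\stockTWO\llower$ for all time.

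I do not expect a serious obstacle: the entire argument is two applications of monotonicity of nice growth factors in the effort, one anchored at $(\stockONE\llower,\stockTWO\llower)$ for~\eqref{eq:favorable_conditions} and one at $(\one_0,\two_0)$ for kernel membership. The point demanding the most care is the bookkeeping of these two distinct anchor points and of the chain $\catchONE\llower\leq\catchONE\lloweropt(\one_0,\two_0)\leq\catchONE\lloweropt$, making sure the equality $\dynamicsONE=1$ is invoked at the correct biomass pair; the genuine viability work --- the construction of the adaptive controls realizing the constraints --- is entirely delegated to Proposition~\ref{pr:predator_prey}.
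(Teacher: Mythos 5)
Your proposal is correct and follows essentially the same route as the paper: both reduce the statement to the viability-kernel characterization (Proposition~\ref{pr:predator_prey}/\ref{pr:predator_preyBIS}) by using monotonicity of the nice growth factors in the effort, anchored once at $(\stockONE\llower,\stockTWO\llower)$ to get~\eqref{eq:favorable_conditions} via the chain $\catchONE\llower\leq\catchONE\lloweropt(\one_0,\two_0)\leq\catchONE\lloweropt$ and once at $(\one_0,\two_0)$ to get kernel membership. The only (harmless) differences are that you instantiate the kernel proposition directly at the levels $\catchONE\llower,\catchTWO\llower$ whereas the paper instantiates it at the EVY themselves and deduces the lower yields a fortiori, and that you spell out the compactness/intermediate-value details of well-definedness that the paper leaves implicit.
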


From the practical point of view, the upper quantities 
\( \catchONE\lloweropt(\one_0,\two_0) \) and
\( \catchTWO\lloweropt(\one_0,\two_0) \) in
\eqref{eq:state_maximal_catches} cannot be seen as 
catches targets, but rather as \emph{crisis limits}.
Indeed, the closer to them, the more vulnerable,
since the initial point is close to the viability kernel boundary.

Notice that the yield $\catchONE\lloweropt(\one_0,\two_0)$ depends,
first,
on both species biomasses $(\one_0,\two_0)$,
second,
on both conservation minimal levels $\stockONE\llower$ 
and $\stockTWO\llower$,
third,
on the ecosystem model by the growth factor $\dynamicsONE$;
the same holds for $\catchTWO\lloweropt(\one_0,\two_0)$. 
Thus, these yields are designed jointly on the basis of the whole
ecosystem model and of all the conservation minimal levels; this is why we coined them \emph{ecosystem} viable yields.

This observation may have practical consequences. 
Indeed, the catches guaranteed for one species depend not only on the 
biological minimal level of the same species, but on the other species.
For instance, in the Peruvian upwelling ecosystem, it is customary to
increase the biological minimal level of the anchovy before an 
El Ni\~{n}o event, but without explicitely considering to lower catches
of other species. Our analysis stresses the point that minimal levels have
to be designed globally to guarantee sustainability for the whole
ecosystem.

\subsection{Ecosystem viable yields and maximum sustainable yields}

Now, we show how \emph{ecosystem viable yields} are related to 
\emph{maximum sustainable yields}.

An \emph{equilibrium} of the ecosystem model~\eqref{eq:ecosystem} is 
a couple $(\one\equil,\two\equil)$ of biomasses (state) and a couple 
$(\controlONE\equil,\controlTWO\equil)$ of harvesting efforts (control)
satisfying 
\begin{equation}
\left\{ \begin{array}{rcl}
 \one\equil &=& \one\equil \dynamicsONE\big(\one\equil,\two\equil,\controlONE\equil\big)  \; , \\
 \two\equil &=& \two\equil \dynamicsTWO\big(\one\equil,\two\equil,\controlTWO\equil\big) \; .
\end{array} \right.
\label{eq:equil_ecosystem}
\end{equation}
The \emph{maximum sustainable yields}, $\MSY_\one$ for species $\one$ 
and  $\MSY_\two$ for species $\two$, are given by 
\begin{equation}
\MSY_\one \defegal \max_{\controlONE\equil,\controlTWO\equil} 
\controlONE\equil \one\equil \mtext{ and }
\MSY_\two \defegal \max_{\controlONE\equil,\controlTWO\equil} 
\controlTWO\equil \two\equil \; . 
\label{eq:MSY_ecosystem}
\end{equation}
They must be jointly defined because the ecosystem equilibrium equations~\eqref{eq:equil_ecosystem} couple all variables.  

Say that the maximum sustainable yields $\MSY_\one$ and $\MSY_\two$ are 
\emph{viable maximum sustainable yields} if the corresponding 
biomasses equilibrium values \( \one\equil \) and \(\two\equil \) 
are such that \( \one\equil \geq \stockONE\llower \) and 
\(\two\equil \geq \stockTWO\llower \).
In this case, $\MSY_\one$ and $\MSY_\two$ are ecosystem viable yields
for the couple $(\one\equil,\two\equil)$ of initial biomasses:
indeed, the stationary harvest strategy 
\( \controlONE(t)=\controlONE\equil \) and
\( \controlTWO(t)=\controlTWO\equil \) drives 
the ecosystem model~\eqref{eq:ecosystem} at equilibrium 
$(\one\equil,\two\equil)$ which satisfies the conservation minimal levels 
\( \one\equil \geq \stockONE\llower \) and 
\(\two\equil \geq \stockTWO\llower \).

Notice that the maximum sustainable yields $\MSY_\one$ and $\MSY_\two$ are defined independently of the initial biomasses, whereas the {ecosystem viable yields} (EVY)
\( \catchONE\lloweropt(\one_0,\two_0) \) 
and \( \catchTWO\lloweropt(\one_0,\two_0) \) explicitely depend upon them.

\section{Numerical application to the hake--anchovy couple in the 
Peruvian upwelling ecosystem (1971--1981)}
\label{sec:A_numerical_application_to_the_hake-anchovy_Peruvian_ecosystem}

We provide a viability analysis of the
hake--anchovy Peruvian fisheries between the years 1971 and 1981.
For this, we shall consider a discrete-time Lotka--Volterra model 
for the couple anchovy (prey $\one$) and hake (predator $\two$), 
then provide an explicit description of 
viable states. 

We warn the reader that our emphasis is not on developing a ``knowledge'' biological model to ``faithfully'' describe the complexity of the Peruvian upwelling ecosystem.
This formidable task is out of our competencies, and is not 
necessary for our analysis.
Indeed, our approach makes use, not of ``knowledge'' models,
but of ``action'' models; these are small, compact, models which capture essentials features of the system in what concern decision-making. 
In our case, we needed a compact model able to put in consistency 
biomass and catches yearly data between the years 1971 and 1981.
We chose a discrete--time Lotka--Volterra model, despite 
well-known criticisms as candidate for a ``knowledge'' biological model \citep*{Hall:1988, Murray:2002}, but for its compactness 
qualities and for the reasonable fit (see Figure~\ref{fig:anchovy_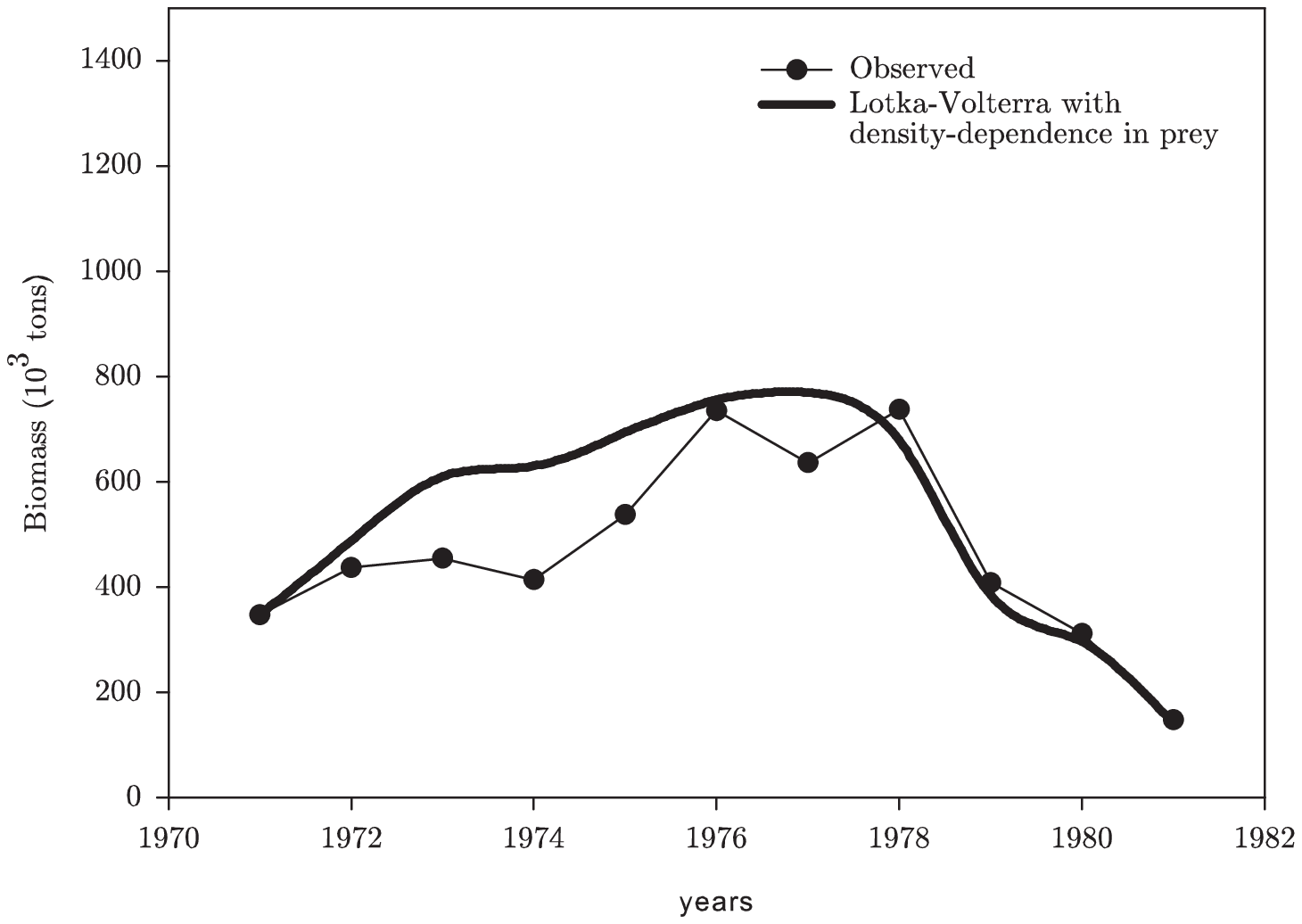}).

\subsection{Viable states and 
ecosystem sustainable yields for a Lotka--Volterra system}

Consider the following discrete--time Lotka--Volterra system of
equations with density--dependence in the prey
\begin{equation}
\left\{ \begin{array}{rcl}
\one(t+1) &=& \displaystyle 
R\one(t)-\frac{R}{\kappa}\one^2(t)-\alpha \one(t)\two(t) 
- \controlONE(t) \one(t) \; ,\\[3mm]
\two(t+1) &=& L\two(t)+\beta \one(t)\two(t)-\controlTWO(t)\two(t) \; ,
\end{array} \right.
\label{eq:LV_with_dd}
\end{equation}
where $R>1$, $0<L<1$, $\alpha >0$, $\beta >0$ and
$\kappa=\tfrac{R}{R-1}K$, with $K>0$ the carrying capacity for prey. 
In the dynamics~\eqref{eq:ecosystem}, we identify 
$\dynamicsONE(\one,\two,\controlONE)=
R-\frac{R}{\kappa}\one-\alpha \two - \controlONE $ and
$\dynamicsTWO(\one,\controlTWO)=L+\beta \one- \controlTWO $.

By Proposition~\ref{pr:favorable_conditions_equilibrium}, 
we obtain that, for any initial point $(\one_0,\two_0)$ such that 
\begin{equation}
\one_0 \geq \stockONE\llower \; , \quad
\two_0 \geq \stockTWO\llower \; , \quad
\one_0 ( R-\frac{R}{\kappa}\one_0-\alpha \two_0 ) \geq \stockONE\llower
\; ,
\label{eq:LV_viability}
\end{equation}
the ecosystem sustainable yields are given by
\begin{equation}
\left\{ \begin{array}{rcl}
\catchONE\lloweropt(\one_0,\two_0) &=& 
\min \left\{ \stockONE\llower(R-\tfrac{R}{\kappa}\stockONE\llower -\alpha \stockTWO\llower) - \stockONE\llower , 
\one_0 ( R - \frac{R}{\kappa}\one_0 - \alpha\two_0) -
\stockONE\llower  \right\}   \\[3mm]
\catchTWO\lloweropt(\one_0,\two_0) &=& 
\stockTWO\llower(L + \beta \stockONE\llower - 1) \; .
\end{array} \right.
\label{eq:LV_ecosystem_sustainable_yields}
\end{equation}
In other words, if viably managed, the ecosystem could produce at least 
$ \catchONE\lloweropt(\one_0,\two_0) $ 
and $\catchTWO\lloweropt(\one_0,\two_0) $, while respecting
biological minimal levels $\stockONE\llower$ and $\stockTWO\llower$.

\subsection{A viability analysis of the
hake--anchovy Peruvian fisheries between the years 1971 and 1981}

The Peruvian upwelling ecosystem is extremely productive and dominated by anchovy (\emph{Engraulis ringens}) dynamics. It is well known that anchovy fisheries are very sensitive to environmental variability~\citep*{Checkley:2009, Sun_etal:2001}, and the Peruvian anchovy is subject to environmental perturbations such as El Ni\~{n}o Southern Oscillation (ENSO) variability~\citep*{Bertrand_etal:2004}). However, for this simple predator-prey model using hake and anchovy, we have assumed that no uncertainties affect the ecosystem dynamics. 
Indeed, we feel that we have to go step by step in introducing the EVY concept, first focusing on the deterministic case. 
Thus, the period between the years 1971 and 1981 is suitable for this first version of the model, due to the absence of strong El Ni\~{n}o events in the middle of the period. Furthermore, the long-term dynamics of the Peruvian upwelling ecosystem is dominated by shifts between alternating anchovy and sardine regimes that restructure the entire ecosystem~\citep*{Alheit-Niquen:2004}. The period from 1970 to 1985 was characterized by positive temperature anomalies and low anchovy abundances, after the anchovy collapse in 1971~\citep*{Alheit-Niquen:2004}, so the competition between the fishery and hake was reduced due to low anchovy catches and anchovy mortality due to hake predation increase. Particularly, the changes in the ecosystem after 1971 led to an increase of five times, in average, in the predation rates of hake over anchovy between 1971 to 1980, with a peak in 1977~\citep*{Pauly-Palomares:1989}. 

Between the years 1971 and 1981, we have 11 couples of biomasses, and the same for catches. The 5~parameters of the Lotka-Volterra model are estimated minimizing a weighted residual squares sum function using a conjugate gradient method, with central derivatives. Estimated parameters and comparisons of observed and simulated biomasses are shown in Figure~\ref{fig:anchovy_hake_non_linear.eps}.

\begin{figure}
\centering
\begin{tabular}{cc}
\subfigure[Anchovy]{ \includegraphics[width=0.45\textwidth]%
{\FIGDIR 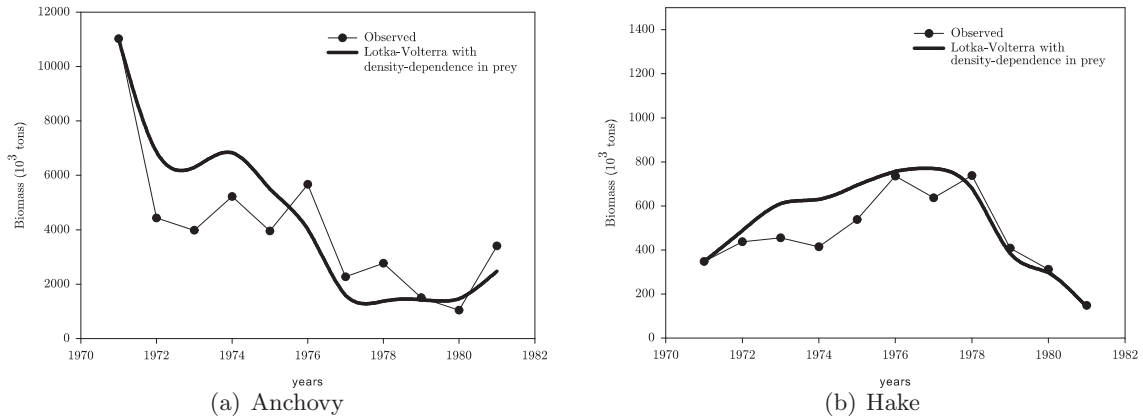}} &
\subfigure[Hake]{\includegraphics[width=0.45\textwidth]%
{\FIGDIR hake_non_linear.eps}}
\end{tabular}
\caption{Comparison of observed and simulated biomasses of anchovy and
  hake using a Lotka--Volterra model with density-dependence in the
  prey (1971--1981). Model parameters are 
$R=2.25$~year$^{-1}$, 
$L=0.945$~year$^{-1}$, 
$\kappa=67,113~\times 10^3$~\tonnes\ ($K=37,285~\times 10^3$~\tonnes),
$\alpha=1.220\times 10^{-6}$~\tonnes$^{-1}$,
$\beta= 4.845\times 10^{-8}$~\tonnes$^{-1}$. }
\label{fig:anchovy_hake_non_linear.eps}
\end{figure}

We consider values of $\stockONE\llower=7,000,000$~\tonnes\ and
$\stockTWO\llower=200,000$~\tonnes\ for minimal biomass levels 
\citep*{IMARPE:2000, IMARPE:2004}. 
Conditions~\eqref{eq:LV_viability} are satisfied 
and the expressions~\eqref{eq:LV_ecosystem_sustainable_yields} give 
the ecosystem viable yields (EVY) 
\begin{equation}
\catchONE\lloweropt(\one_0,\two_0) =5,399,000~\tonnes\  \mtext{ and }
\catchTWO\lloweropt(\one_0,\two_0)=56,800~\tonnes\ \; .
\end{equation}
In other words, such yields were theoreticaly susceptible to be
guaranteed in a   sustainable way starting from year 1971.
In reality, the catches of year 1971 were very high and 
the biomasses trajectories were well below the biological minimal levels for
fourteen years. 

The $4,250,000$~\tonnes\ anchovy quota and the 
$55,000$~\tonnes\ hake quota, respectively, established for the year 2006
\citep*{PRODUCE:2005},
or the $5,000,000$~\tonnes\ anchovy quota 
and the $35,000$~\tonnes\ hake quota,
respectively, established for the year 2007 \citep*{PRODUCE:2006a}
 are rather close to the EVY
\( \catchONE\lloweropt(\one_0,\two_0) =5,399,000\)~\tonnes\  and
\( \catchTWO\lloweropt(\one_0,\two_0)=56,800 \)~\tonnes.
Thus, our approach provides reasonable figures.\footnote{%
At this stage, we do not claim that the figures may be proposed as 
yields for the present management of hake--anchovy Peruvian fisheries.
Indeed, our computations of EVY rely upon a dynamical model adjusted for 
some thirty years ago. To propose EVY, we should first dispose of a dynamical model adapted to the current situation, because it ought to reflect the new ecosystem functioning and the depleted state of stocks 
\citep*{Ballon-Wosnitza-Mendo-Guevara-Carrasco-Bertrand:2008}.
This is beyond the scope of this paper.}


\section{Conclusion}
\label{sec:Conclusion}

We have defined the notion of sustainable yields for ecosystem,
and provided ways to compute them by means of a viability analysis of
generic ecosystem models with harvesting. 
Our analysis stresses the point that yields should certainly be designed globally, and not species by species as in the current practice, to guarantee sustainability for the whole ecosystem. 
Our results have then been applied to a Lotka--Volterra model using the 
anchovy--hake couple in the Peruvian upwelling ecosystem.
Despite simplicity\footnote{%
In addition to hake, there are other important
predators of anchovy in the Peruvian upwelling ecosystem, such as
mackerel and horse mackerel, seabirds and pinnipeds, which were not
considered. Also,
anchovy has been an important prey of hake, but other prey species have
been found in the opportunistic diet of hake \citep*{Tam:2006}
}
of the models considered, our approach has provided reasonable figures 
and new insights: it may be a mean of designing
sustainable yields from an ecosystem point of view. 

We now discuss the limits of the EVY concept, as presented in this paper: 
application to biomass ecosystem models without age or spatial structure, no economic consideration, no uncertainties. We stress that EVY is a flexible concept, and we hint at possible extensions to incorporate the missing dimensions listed above.

The framework we propose is not restricted to two populations, each described by its global biomass, but it may be adapted to several species, each described by a vector of abundances at age, or by vectors of abundances at age
for each patch in a spatial model, etc.
Suppose that the time evolution is given by a
dynamical equation reflecting ecosystemic interactions and driven by efforts or by catches. 
Suppose that minimal safety levels (reference points) are fixed for biological indicators like spawning stock biomass, 
abundances at specific ages, etc. (such reference points for biological indicators like spawning stock biomass are generally given by international  bodies like the ICES, or nationally).  
Ecosystem viable yields are minimal harvests for each species which  
can be guaranteed for all times while respecting the above minimal safety levels for biological indicators for all times too. 

It is often objected with reason that the MSY concept is developed without any economic consideration. As presented here, the EVY suffers the same criticism. 
However, the EVY concept is flexible enough to incorporate 
some economic considerations. For instance, upper bounds for fishing costs may be incorporated as constraints to be satisfied for all time, aside with minimal biomass levels. In this sense, EVY will be guaranteed yields 
compatible with biological and economic restrictions.

As presented here, the EVY framework supposes that no uncertainties affect the ecosystem dynamics. Though we have the tools to tackle such an important issue
(see stochastic viability in~\citep*{DeLara-Doyen:2008,DeLara-Martinet:2009,Doyen-DeLara:2010}), 
we feel that we have to go step by step. 
This paper introduces the EVY concept in the deterministic case,
providing an extension of the MSY concept in two directions: 
from equilibrium to viability (more robust), from monospecies to multispecies models. The extension to the uncertain case is currently under investigation.

Thus, control and viability theory methods have allowed us to 
introduce ecosystem considerations, such as multispecies and
multiobjectives, and have contributed to integrate the long term dynamics, 
which is generally not considered in conventional fishery management.  

\bigskip

\paragraph*{Acknowledgments.}

This paper was prepared within the MIFIMA (Mathematics, 
  Informatics and Fisheries Management) international research network;
we thank CNRS, INRIA and the French Ministry of Foreign Affairs 
for their funding and support through the regional cooperation program
STIC--AmSud.
We thank the staff of the Peruvian Marine Research Institute (IMARPE),
especially Erich Diaz and Nathaly Vargas for discussions on anchovy and
hake fisheries. 
We thank Sophie Bertrand and Arnaud Bertrand from IRD at IMARPE for 
their insightful comments. 
We also thank Yboon Garcia (IMCA-Peru and CMM-Chile)
for a discussion on the ecosystem model case.
We are particularly indebted to the reviewer who, by his/her comments and questions, helped us improve the presentation.

\appendix

\section{Discrete--time viability}
\label{sec:Viability_and_sustainable_management}

Let us  consider a nonlinear control system  described in discrete--time
by the dynamic equation
\begin{equation}
\left\{ \begin{array}{l}
x(t+1)=\dynamics\big(x(t),u(t)\big) \mtext{ for all } t \in \NN ,\\
x(0)=x_0 \mtext{ given,}
\end{array} \right .
\label{eq:generaldyn}
\end{equation}
where  the \emph{state variable} $x(t)$ belongs to  the finite
dimensional state space $\XX=\RR^{n_{\XX}}$, the \emph{control variable
} $u(t)$ is an element of the \emph{control set} $\UU=\RR^{n_{\UU}}$
while the \emph{dynamics} $\dynamics$ maps $\XX \times \UU$ into  $\XX$.

A controller or a decision maker  describes ``acceptable
configurations of the system'' through a set 
$\obj \subset \XX \times \UU$ termed the \emph{acceptable set}
\begin{equation}\label{eq:constraint}
\big(x(t),u(t)\big) \in \obj \mtext{ for all } t \in \NN \; ,
\end{equation}
where $\obj $ includes both system states and controls
constraints. 

The \emph{state constraints set} $\SCS$ associated with
$\obj $ is obtained by
projecting the acceptable set $\obj $ onto the state space $\XX$:
\begin{equation}
\SCS \defegal {\rm Proj}_{\XX}(\obj ) =
\{x \in \XX \mid \exists u \in \UU \, , \, (x,u) \in \obj \} \; .
\label{eq:state_constraints_set}
\end{equation}

Viability is defined as the ability to choose,
at each time step $t \in \NN$, a control $u(t) \in \UU$ such that the system
configuration remains acceptable.
More precisely, viability occurs when the following set of 
initial states is not empty:
\begin{equation}
\VV(\dynamics,\obj ) \defegal \left\{x_0\in \XX \left|
\begin{array}{l}\exists\; (u(0),u(1), \ldots ) \mtext{ and } (x(0),x(1),
\ldots )\\
\mtext{ satisfying } \eqref{eq:generaldyn} \mtext{ and }\eqref
{eq:constraint}
\end{array} \right.
\right\} \; .
\label{eq:viability_kernel}
\end{equation}
The set $\VV(\dynamics,\obj )$ is called the \emph{viability kernel} 
\citep*{Aubin:1991} associated
with the dynamics $\dynamics$ and the acceptable set $\obj $.
By definition, we have $\VV(\dynamics,\obj) \subset \SCS
={\rm Proj}_{\XX}(\obj )$ but, in general, the
inclusion is strict. For a decision maker or control designer, knowing
the viability kernel has practical interest since it describes the
initial states for which controls can be found that maintain the
system in an acceptable configuration forever.
However, computing this kernel is not an easy task in general.
\bigskip

We now focus on some tools to achieve viability.
A subset $\VV$ is said to be
\emph{weakly invariant}  for the dynamics $\dynamics$ in the acceptable set
$\obj $, or a \emph{viability domain} of $\dynamics$ in $\obj $, if
\begin{equation}
\forall x \in \VV  \, , \quad \exists u \in \UU  \, , \quad
(x,u) \in \obj  \mtext{ and } \dynamics(x,u) \in \VV \; .
\label{eq:viability_domain}
\end{equation}
That is, if one starts from $\VV$, an acceptable control may
transfer the state in $\VV$.
Moreover, according to viability theory \citep*{Aubin:1991}, the viability
kernel $\VV(\dynamics,\obj )$ turns out to be the union of all viability
domains, or also the largest viability domain:
\begin{equation}
\VV(\dynamics,\obj)= \bigcup \biggl\{ \VV,\;\VV \subset \SCS,\;\VV\mtext{ 
viability domain for $\dynamics$ in ${\obj}$}  \biggr\}  \; .
\label{eq:viability_kernel_union}
\end{equation}
\emph{Viable controls} are those controls $u \in \UU $ such that 
\( (x,u) \in \obj  \) and \( \dynamics(x,u) \in \VV(\dynamics,\obj) \). 

A major interest of such a property lies in the fact that
any viability domain for the dynamics $\dynamics$ in the acceptable set
$\obj $ provides a \emph{lower approximation} of the viability kernel.
An \emph{upper approximation} $\VV_k$ of the viability kernel
is given by the so called
\emph{viability kernel until time $k$
associated with $\dynamics$ in $\obj $}:
\begin{equation}
\VV_{k} \defegal \left\{x_0\in \XX \left|
\begin{array}{l}\exists\; (u(0),u(1), \ldots, u(k) ) \mtext{ and }
(x(0),x(1), \ldots, x(k) )\\
\mtext{ satisfying } \eqref{eq:generaldyn} \mtext{ for } t=0,\ldots,k-1 \\
\mtext{ and }\eqref{eq:constraint} \mtext{ for } t=0,\ldots,k
\end{array} \right.
\right\} \; .
\label{eq:Vtau}
\end{equation}
We have
\begin{equation}
\VV(\dynamics,\obj ) \subset \VV_{k+1}
\subset \VV_{k} \subset \VV_0 = \SCS \, \mtext{ for all } k \in \NN \; .
\label{eq:inclusionsVtau}
\end{equation}
It may be seen by induction that the decreasing sequence of
viability kernels until time $k$ satisfies
\begin{equation}
\VV_0 = \SCS \mtext{ and }
\VV_{k+1} = \left\{ x \in \VV_k \; \left| \; \exists u \in \UU \, ,
\, (x,u) \in \obj \mtext{ and } \dynamics(x,u) \in \VV_k
\right. \right\} \; .
\label{eq:induction}
\end{equation}
By~\eqref{eq:inclusionsVtau},
such an algorithm provides approximation from above of the
viability kernel as follows:
\begin{equation}
\VV(\dynamics,\obj ) \subset \ds \ov{\bigcap}_{k \in \NN} \VV_k =
\lim_{k \to +\infty} \! \downarrow  \! \VV_k \; .
\label{eq:algo1}
\end{equation}
Conditions ensuring that equality holds may be found in
\citep*{Stpierre:1994}. 
Notice that, when the decreasing sequence $(\VV_{k})_{k \in \NN}$ of 
viability kernels up to time $k$ is stationary, its limit is the 
viability kernel.
Indeed, if $\VV_{k} = \VV_{k+1} $ for some $k$, then $\VV_{k}$ is a
viability domain by~\eqref{eq:induction}.
Now, by~\eqref{eq:viability_kernel}, $\VV(\dynamics,\obj ) $ is the 
largest of viability domains. 
As a consequence, $\VV_{k} = \VV(\dynamics,\obj ) $ since
$ \VV(\dynamics,\obj ) \subset \VV_{k}$ by~\eqref{eq:inclusionsVtau}.
We shall use this property in the following
Sect.~\ref{sec:Viable_control_of_generic_nonlinear_ecosystem_models}.

\section{Viable control of generic nonlinear ecosystem models with harvesting}
\label{sec:Viable_control_of_generic_nonlinear_ecosystem_models}

For a generic ecosystem model~\eqref{eq:ecosystem},
we provide an explicit description of the viability kernel.
Then, we shall specify the results for predator--prey systems, in
particular for discrete-time Lotka--Volterra models.

The acceptable set $\obj$ in~\eqref{eq:constraint} is defined by 
\emph{minimal biomass levels} 
$\stockONE\llower\geq 0$, $\stockTWO\llower\geq 0$
and \emph{minimal catch levels} 
$\catchONE\llower\geq 0$, $\catchTWO\llower\geq 0$:
\begin{equation}
\obj=\{\,(\one,\two,\controlONE,\controlTWO)\in \RR^4 \mid 
\one\geq \stockONE\llower, \; 
\two\geq \stockTWO\llower,  \; 
\controlONE \one\geq\catchONE\llower,  \; 
\controlTWO \two\geq\catchTWO\llower\,\} \; .
\label{eq:thresholds}
\end{equation}

\subsection{Expression of the viability kernel}
\label{ss:Expression_of_the_viability_kernel}

The following Proposition~\ref{pr:predator_preyBIS} gives an explicit
description of the viability kernel, under some conditions on the
minimal levels. 

\begin{proposition}
Assume that the function $\dynamicsONE:\RR^3 \to \RR$ is 
continuously decreasing in the control $\controlONE$  
and satisfies $\lim_{\controlONE \to +\infty}
\dynamicsONE(\one,\two,\controlONE)\leq 0$,
and that $\dynamicsTWO:\RR^3 \to \RR$ is 
continuously decreasing in the control variable $\controlTWO$,
and satisfies $\lim_{\controlTWO \to +\infty}
\dynamicsTWO(\one,\two,\controlTWO)\leq 0$. 
If the minimal levels in~\eqref{eq:thresholds} are such that
the following growth factors are greater than one
\begin{equation}
\dynamicsONE(\stockONE\llower,\stockTWO\llower,\frac{\catchONE\llower}{\stockONE\llower})
\geq 1
\mtext{ and } 
\dynamicsTWO(\stockONE\llower,\stockTWO\llower,\frac{\catchTWO\llower}{\stockTWO\llower})
\geq 1 \; ,
\label{eq:favorable_conditions2}
\end{equation}
the viability kernel associated with the dynamics
$\dynamics$ in~\eqref{eq:ecosystem} 
and the acceptable set $\obj$ in~\eqref{eq:thresholds} is given by
\begin{equation}
\VV(\dynamics,\obj) = \left\{(\one,\two) \mid
\one\geq \stockONE\llower, \;  \two\geq \stockTWO\llower, \;  
\one\dynamicsONE(\one,\two,\frac{\catchONE\llower}{\one})\geq \stockONE\llower, \;
\two\dynamicsTWO(\one,\two,\frac{\catchTWO\llower}{\two})\geq \stockTWO\llower 
\right\}.
\label{eq:predator_prey_viability_kernel}
\end{equation}
\label{pr:predator_preyBIS}
\end{proposition}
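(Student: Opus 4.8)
The plan is to run the viability algorithm \eqref{eq:induction} and to show that the decreasing sequence $(\VV_k)$ becomes stationary already at $k=1$; by the stationarity criterion recalled in Appendix~\ref{sec:Viability_and_sustainable_management}, its limit is then the viability kernel, and it will coincide with the right-hand side of \eqref{eq:predator_prey_viability_kernel}.

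First I would identify $\VV_0=\SCS$. Since each biomass threshold is nonnegative and, for positive biomasses, the catch floors $\controlONE\one\geq\catchONE\llower$ and $\controlTWO\two\geq\catchTWO\llower$ can always be met by taking the efforts large enough, the projection \eqref{eq:state_constraints_set} gives $\SCS=\{(\one,\two)\mid \one\geq\stockONE\llower,\ \two\geq\stockTWO\llower\}$. Next I would compute $\VV_1$ from \eqref{eq:induction}. The decisive structural fact, and the reason two distinct controls are needed, is that $\controlONE$ enters only the $\one$-equation and $\controlTWO$ only the $\two$-equation of \eqref{eq:ecosystem}, so the two one-step constraints decouple. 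Fixing $(\one,\two)\in\VV_0$, admissibility forces $\controlONE\geq\catchONE\llower/\one$; since $\dynamicsONE$ is decreasing in $\controlONE$, the map $\controlONE\mapsto\one\dynamicsONE(\one,\two,\controlONE)$ is decreasing, so the largest next-biomass still compatible with the catch floor is attained at $\controlONE=\catchONE\llower/\one$ and equals $\one\dynamicsONE(\one,\two,\catchONE\llower/\one)$. Hence an admissible effort keeping $\one(t+1)\geq\stockONE\llower$ exists iff $\one\dynamicsONE(\one,\two,\catchONE\llower/\one)\geq\stockONE\llower$, and symmetrically for $\two$. Therefore $\VV_1$ is exactly the set on the right-hand side of \eqref{eq:predator_prey_viability_kernel}.

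Then I would show $\VV_2=\VV_1$, i.e.\ that $\VV_1$ is weakly invariant. The anchor is the corner $(\stockONE\llower,\stockTWO\llower)$: hypothesis \eqref{eq:favorable_conditions2} states precisely that $\stockONE\llower\dynamicsONE(\stockONE\llower,\stockTWO\llower,\catchONE\llower/\stockONE\llower)\geq\stockONE\llower$ and likewise for $\two$, which means $(\stockONE\llower,\stockTWO\llower)\in\VV_1$. Given any $(\one,\two)\in\VV_1$, I would steer exactly to this corner: since $\dynamicsONE(\one,\two,\cdot)$ is continuous, decreasing, with limit $\leq 0$, and since $\one\dynamicsONE(\one,\two,\catchONE\llower/\one)\geq\stockONE\llower$, the intermediate value theorem yields $\controlONE\geq\catchONE\llower/\one$ with $\one\dynamicsONE(\one,\two,\controlONE)=\stockONE\llower$; similarly one obtains $\controlTWO\geq\catchTWO\llower/\two$ with $\two\dynamicsTWO(\one,\two,\controlTWO)=\stockTWO\llower$. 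This control lies in $\obj$ and drives $(\one,\two)$ to $(\stockONE\llower,\stockTWO\llower)\in\VV_1$, so $(\one,\two)\in\VV_2$. As $\VV_2\subset\VV_1$ always holds, the sequence is stationary and $\VV(\dynamics,\obj)=\VV_1$, which is \eqref{eq:predator_prey_viability_kernel}.

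The main obstacle is conceptual rather than computational: the set in \eqref{eq:predator_prey_viability_kernel} has no monotonicity in $(\one,\two)$, so one cannot propagate viability from a single favorable point to a neighborhood. The trick that circumvents this is to contract every state of $\VV_1$ onto the single corner $(\stockONE\llower,\stockTWO\llower)$, whose membership in $\VV_1$ is guaranteed by \eqref{eq:favorable_conditions2}, and the decoupling of the two controls is what makes this simultaneous steering in both coordinates possible. I would also spend a line on the degenerate cases $\stockONE\llower=0$ or vanishing biomass, where the catch floor is met trivially and the corresponding inequality in \eqref{eq:predator_prey_viability_kernel} becomes vacuous.
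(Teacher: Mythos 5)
Your proposal is correct and follows essentially the same route as the paper: both run the algorithm \eqref{eq:induction}, identify $\VV_1$ as the right-hand side of \eqref{eq:predator_prey_viability_kernel} by selecting the minimal admissible efforts $\controlONE=\catchONE\llower/\one$, $\controlTWO=\catchTWO\llower/\two$, and then prove $\VV_1\subset\VV_2$ by using continuity, monotonicity and the limit condition to find efforts $\hat{\controlONE},\hat{\controlTWO}$ steering the state exactly to $(\stockONE\llower,\stockTWO\llower)$, whose one-step viability is precisely hypothesis \eqref{eq:favorable_conditions2}, before concluding by the stationarity criterion. Your explicit framing of this step as ``contracting onto the corner'' and your remark on the degenerate case $\stockONE\llower=0$ are presentational additions, not a different argument.
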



\begin{proof}
According to induction~\eqref{eq:induction}, we have:
\begin{eqnarray*}
\VV_0 &=& \{\,(\one,\two) \left|  \one\geq \stockONE\llower,  \two\geq \stockTWO\llower
\right. \,\},\\[3mm]
\VV_1 &=& \left\{(\one,\two) \left| 
\begin{array}{ccc}
\one\geq \stockONE\llower,  \two\geq \stockTWO\llower 
\mtext{ and, for some }\, (\controlONE,\controlTWO)\geq 0,\\ 
\controlONE \one\geq \catchONE\llower, \controlTWO \two\geq \catchTWO\llower, 
\one\dynamicsONE(\one,\two,\controlONE)\geq \stockONE\llower,
\two\dynamicsTWO(\one,\two,\controlTWO)\geq \stockTWO\llower
\end{array} \right. \right\} \\
&=& \left\{(\one,\two) \left| 
\one\geq \stockONE\llower,  \two\geq \stockTWO\llower,  
\one\dynamicsONE(\one,\two,\frac{\catchONE\llower}{\one})\geq \stockONE\llower,
\two\dynamicsTWO(\one,\two,\frac{\catchTWO\llower}{\two})\geq \stockTWO\llower \right. \right\}\\
&& \mtext{ because } \controlONE \mapsto \dynamicsONE(\one,\two,\controlONE) \mtext{ and } 
 \controlTWO \mapsto \dynamicsTWO(\one,\two,\controlTWO) \mtext{ are decreasing,}\\
&& \mtext{ and thus we may select } 
\controlONE=\frac{\catchONE\llower}{\one}, \; 
 \controlTWO =\frac{\catchTWO\llower}{\two}. \\
&& \mtext{Denoting } \one'=
\one\dynamicsONE(\one,\two,\controlONE),
\,\, \two'=\two\dynamicsTWO(\one,\two,\controlTWO), \mtext{we obtain,}\\[4mm]
\VV_2 &=& \left\{(\one,\two) \left| 
\begin{array}{l}
\one\geq \stockONE\llower,  \two\geq \stockTWO\llower\mtext{ and, for some }\, (\controlONE,\controlTWO)\geq 0,\\ 
\controlONE \one\geq \catchONE\llower,\,\, \controlTWO \two\geq \catchTWO\llower\\
\one'\geq \stockONE\llower,\,\, 
\one'\dynamicsONE(\one',\two',\frac{\catchONE\llower}{\one'})\geq \stockONE\llower,\,\, 
\two'\geq \stockTWO\llower,\,\, 
\two'\dynamicsTWO(\one',\two',\frac{\catchTWO\llower}{\two'})\geq \stockTWO\llower
\end{array} \right. \right\} \; .
\end{eqnarray*}
We shall now make use of the property, recalled in
Sect.~\ref{sec:Viability_and_sustainable_management}, that
when the decreasing sequence $(\VV_{k})_{k \in \NN}$ of 
viability kernels up to time $k$ is stationary, its limit is the 
viability kernel $ \VV(\dynamics,\obj )$.
Hence, it suffices to show that $\VV_1\subset \VV_2$ to obtain that
$ \VV(\dynamics,\obj) = \VV_1$.
Let $(\one,\two)\in \VV_1$, so that
\begin{equation*}
\one\geq \stockONE\llower, \quad \two\geq \stockTWO\llower \mtext{ and }~ 
\one\dynamicsONE(\one,\two,\frac{\catchONE\llower}{\one})\geq \stockONE\llower, \quad 
\two\dynamicsTWO(\one,\two,\frac{\catchTWO\llower}{\two})\geq \stockTWO\llower \; .
\end{equation*}
Since $\dynamicsONE:\RR^3 \to \RR$ is
continuously decreasing in the control variable, with
$\lim_{\controlONE \to +\infty}\dynamicsONE(\one,\two,\controlONE)\leq 0$, 
and since $\one\dynamicsONE(\one,\two,\frac{\catchONE\llower}{\one})
\geq \stockONE\llower$, there exists a $\hat{\controlONE} \geq
\frac{\catchONE\llower}{\one}$ (depending on $\one$ and $\two$) such that
$\one'=\one\dynamicsONE(\one,\two,\hat{\controlONE})= \stockONE\llower$.
The same holds for $\dynamicsTWO:\RR^3 \to \RR$ and
$\two'=\two\dynamicsTWO(\one,\two,\hat{\controlTWO})= \stockTWO\llower$.
By~\eqref{eq:favorable_conditions2}, we deduce that
\[
\one'\dynamicsONE(\one',\two',\frac{\catchONE\llower}{\one'}) = \stockONE\llower
\dynamicsONE(\stockONE\llower,\stockTWO\llower,\frac{\catchONE\llower}{\stockONE\llower})
\geq \stockONE\llower \mtext{ and }
\two'\dynamicsTWO(\one',\two',\frac{\catchTWO\llower}{\two'}) = \stockTWO\llower
\dynamicsTWO(\stockONE\llower,\stockTWO\llower,\frac{\catchTWO\llower}{\stockTWO\llower})
\geq \stockTWO\llower \; .
\]
The inclusion $\VV_1\subset \VV_2$ follows.
\end{proof}

\begin{corollary}
Suppose that the assumptions of Proposition~\ref{pr:predator_prey} are
satisfied. 
Denoting 
\[
\left\{ \begin{array}{rcl}
\hat{\controlONE}(\one,\two) &=& \max \{ \controlONE \geq \frac{\catchONE\llower}{\one} \mid
\one\dynamicsONE(\one,\two,\controlONE)= \one\llower \} \; , \\[3mm]
\hat{\controlTWO}(\one,\two) &=& \max \{ \controlTWO \geq \frac{\catchTWO\llower}{\two} \mid
\two\dynamicsTWO(\one,\two,\controlTWO)=\two\llower \} \; ,
\end{array} \right.
\]
the set of viable controls is given by
\begin{equation*}
\UU_{\VV(\dynamics,\obj)}(\one,\two)= \left\{(\controlONE,\controlTWO) \left| 
\begin{array}{l}
\hat{\controlONE}(\one,\two) \geq \controlONE\geq \frac{\catchONE\llower}{\one}, \quad 
\hat{\controlTWO}(\one,\two) \geq \controlTWO\geq \frac{\catchTWO\llower}{\two}, \\
\one'\dynamicsONE(\one',\two',\frac{\catchONE\llower}{\one'})\geq \one\llower,\,\, 
\two'\dynamicsTWO(\one',\two',\frac{\catchTWO\llower}{\two'})\geq \two\llower
\end{array} \right. \right\}  \; ,
\end{equation*}
where 
$\one'= \one\dynamicsONE(\one,\two,\controlONE),\,\, \two'=\two\dynamicsTWO(\one,\two,\controlTWO) $. 
\label{cor:viable-contr}
\end{corollary}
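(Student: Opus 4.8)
The plan is to unfold the definition of \emph{viable controls} recalled in Section~\ref{sec:Viability_and_sustainable_management} --- namely that $(\controlONE,\controlTWO)$ is viable at a state $(\one,\two)$ of the kernel if and only if $(\one,\two,\controlONE,\controlTWO)\in\obj$ and $\dynamics(\one,\two,\controlONE,\controlTWO)\in\VV(\dynamics,\obj)$ --- and to rewrite each of these two memberships as explicit inequalities, using the description~\eqref{eq:thresholds} of $\obj$ and the expression~\eqref{eq:predator_prey_viability_kernel} of the viability kernel from Proposition~\ref{pr:predator_preyBIS}.

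First I would handle the membership $(\one,\two,\controlONE,\controlTWO)\in\obj$. Since the corollary concerns a state $(\one,\two)$ in the viability kernel, which is contained in the state constraints set $\SCS$, the biomass inequalities $\one\geq\stockONE\llower$ and $\two\geq\stockTWO\llower$ hold automatically; hence this membership reduces to the catch constraints $\controlONE\one\geq\catchONE\llower$ and $\controlTWO\two\geq\catchTWO\llower$, that is, to the lower bounds $\controlONE\geq\frac{\catchONE\llower}{\one}$ and $\controlTWO\geq\frac{\catchTWO\llower}{\two}$. Next I would handle the membership $\dynamics(\one,\two,\controlONE,\controlTWO)\in\VV(\dynamics,\obj)$. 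Writing $\one'=\one\dynamicsONE(\one,\two,\controlONE)$ and $\two'=\two\dynamicsTWO(\one,\two,\controlTWO)$, membership of $(\one',\two')$ in the kernel~\eqref{eq:predator_prey_viability_kernel} amounts to the four inequalities $\one'\geq\stockONE\llower$, $\two'\geq\stockTWO\llower$, $\one'\dynamicsONE(\one',\two',\frac{\catchONE\llower}{\one'})\geq\stockONE\llower$ and $\two'\dynamicsTWO(\one',\two',\frac{\catchTWO\llower}{\two'})\geq\stockTWO\llower$. The last two appear verbatim in the asserted description of $\UU_{\VV(\dynamics,\obj)}(\one,\two)$, so the remaining work is to convert the two successor-biomass floors $\one'\geq\stockONE\llower$ and $\two'\geq\stockTWO\llower$ into the upper bounds $\controlONE\leq\hat{\controlONE}(\one,\two)$ and $\controlTWO\leq\hat{\controlTWO}(\one,\two)$.

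This conversion is the crux, and I expect it to be the main (though mild) obstacle. The map $\controlONE\mapsto\one\dynamicsONE(\one,\two,\controlONE)$ is continuous and decreasing, takes a value $\geq\stockONE\llower$ at $\controlONE=\frac{\catchONE\llower}{\one}$ precisely because $(\one,\two)$ lies in the kernel~\eqref{eq:predator_prey_viability_kernel}, and tends to a limit $\leq 0\leq\stockONE\llower$ as $\controlONE\to+\infty$. By the intermediate value theorem the set $\{\controlONE\geq\frac{\catchONE\llower}{\one}\mid\one\dynamicsONE(\one,\two,\controlONE)=\stockONE\llower\}$ is nonempty; by continuity it is closed, and since the map eventually drops below $\stockONE\llower$ it is bounded above, so the maximum exists and equals $\hat{\controlONE}(\one,\two)$. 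Monotonicity then yields the equivalence $\one'\geq\stockONE\llower\iff\frac{\catchONE\llower}{\one}\leq\controlONE\leq\hat{\controlONE}(\one,\two)$: for $\controlONE\leq\hat{\controlONE}(\one,\two)$ the value stays at or above $\stockONE\llower$, whereas for $\controlONE>\hat{\controlONE}(\one,\two)$ maximality forbids equality and monotonicity forbids a strictly larger value, forcing $\one'<\stockONE\llower$.

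The identical argument applied to $\dynamicsTWO$ in the variable $\controlTWO$ produces $\hat{\controlTWO}(\one,\two)$ together with the equivalence $\two'\geq\stockTWO\llower\iff\frac{\catchTWO\llower}{\two}\leq\controlTWO\leq\hat{\controlTWO}(\one,\two)$. Collecting the lower bounds coming from $\obj$, the two upper bounds just obtained, and the two successor conditions $\one'\dynamicsONE(\one',\two',\frac{\catchONE\llower}{\one'})\geq\stockONE\llower$ and $\two'\dynamicsTWO(\one',\two',\frac{\catchTWO\llower}{\two'})\geq\stockTWO\llower$ that carry over unchanged, I would conclude that $(\controlONE,\controlTWO)$ is viable at $(\one,\two)$ exactly when it satisfies the system defining $\UU_{\VV(\dynamics,\obj)}(\one,\two)$, which is the claim. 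Beyond the well-definedness of $\hat{\controlONE}$ and $\hat{\controlTWO}$, every step is direct substitution, so I anticipate no further difficulty.
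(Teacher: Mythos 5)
Your proof is correct and is essentially the argument the paper intends: the corollary is stated there without a written proof, being a direct unfolding of the definition of viable controls together with the kernel expression~\eqref{eq:predator_prey_viability_kernel} of Proposition~\ref{pr:predator_preyBIS}, which is exactly your decomposition. Your key step --- converting the successor-biomass floors $\one'\geq\stockONE\llower$, $\two'\geq\stockTWO\llower$ into the upper bounds $\hat{\controlONE}(\one,\two)$, $\hat{\controlTWO}(\one,\two)$ via continuity, monotonicity and the intermediate value theorem --- is the same device the paper itself uses inside the proof of Proposition~\ref{pr:predator_preyBIS}, where a control $\hat{\controlONE}$ with $\one\dynamicsONE(\one,\two,\hat{\controlONE})=\stockONE\llower$ is constructed, so nothing is missing.
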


\subsection{Proof of
  Proposition~\ref{pr:favorable_conditions_equilibrium}}

\begin{proof}
By~\eqref{eq:conditions_initial_point}, and the property that 
both $\dynamicsONE$ and $\dynamicsTWO$ are decreasing in the control
variable, the quantities~\eqref{eq:state_maximal_catches} exist.

Also since both $\dynamicsONE$ and $\dynamicsTWO$ are decreasing in the control variable, we obtain that 
\begin{equation*}
\dynamicsONE(\stockONE\llower,\stockTWO\llower,
\frac{\catchONE\lloweropt(\one_0,\two_0)}{\stockONE\llower})
\geq
\dynamicsONE(\stockONE\llower,\stockTWO\llower,
\frac{\catchONE\lloweropt}{\stockONE\llower}) = 1 
\mtext{ and } 
\dynamicsTWO(\stockONE\llower,\stockTWO\llower,
\frac{\catchTWO\lloweropt(\one_0,\two_0)}{\stockTWO\llower})\geq 
\dynamicsTWO(\stockONE\llower,\stockTWO\llower,
\frac{\catchTWO\lloweropt}{\stockTWO\llower}) = 1 \; .
\end{equation*}
To end up, the above inequalities and 
the assumption that 
$ \one_0 \geq \stockONE\llower$ and $\two_0 \geq \stockTWO\llower$
allow us to conclude, thanks to Proposition~\ref{pr:predator_preyBIS}, that $(\one_0,\two_0)$ belongs to the viability kernel 
\( \VV(\dynamics,\obj) \)
given in~\eqref{eq:predator_prey_viability_kernel}.

In other words, starting from the initial point 
\( (\one(t_0),\two(t_0)) = (\one_0,\two_0) \),
there exists an appropriate harvesting path 
which can provide, for all time, at least the
catches~\eqref{eq:state_maximal_catches}. 
\end{proof}

\newcommand{\noopsort}[1]{} \ifx\undefined\allcaps\def\allcaps#1{#1}\fi


\begin{thebibliography}{35}
\providecommand{\natexlab}[1]{#1}
\providecommand{\url}[1]{\texttt{#1}}
\expandafter\ifx\csname urlstyle\endcsname\relax
  \providecommand{\doi}[1]{doi: #1}\else
  \providecommand{\doi}{doi: \begingroup \urlstyle{rm}\Url}\fi

\bibitem[Alheit and {N}iquen(2004)]{Alheit-Niquen:2004}
J{\"{u}}rgen Alheit and Miguel~\ {N}iquen.
\newblock Regime shifts in the {Humboldt Current Ecosystem}.
\newblock \emph{Progress in Oceanography}, 60:\penalty0 201--222, 2004.

\bibitem[Aubin(1991)]{Aubin:1991}
J-P. Aubin.
\newblock \emph{Viability Theory}.
\newblock Birkh{\"a}user, Boston, 1991.
\newblock 542 pp.

\bibitem[Ball{\'o}n et~al.(2008)Ball{\'o}n, Wosnitza-Mendo, Guevara-Carrasco,
  and Bertrand]{Ballon-Wosnitza-Mendo-Guevara-Carrasco-Bertrand:2008}
Michael Ball{\'o}n, Claudia Wosnitza-Mendo, Renato Guevara-Carrasco, and Arnaud
  Bertrand.
\newblock The impact of overfishing and {El Ni{\~n}o} on the condition factor
  and reproductive success of {Peruvian} hake, {Merluccius} gayi peruanus.
\newblock \emph{Progress In Oceanography}, 79\penalty0 (2-4):\penalty0 300 --
  307, 2008.

\bibitem[B\'en\'e and Doyen(2003)]{Doyen-Bene:2003}
C.~B\'en\'e and L.~Doyen.
\newblock Sustainability of fisheries through marine reserves: a robust
  modeling analysis.
\newblock \emph{Journal of Environmental Management}, 69\penalty0 (1):\penalty0
  1--13, 2003.

\bibitem[B\'en\'e et~al.(2001)B\'en\'e, Doyen, and
  Gabay]{Bene-Doyen-Gabay:2001}
C.~B\'en\'e, L.~Doyen, and D.~Gabay.
\newblock A viability analysis for a bio-economic model.
\newblock \emph{Ecological Economics}, 36:\penalty0 385--396, 2001.

\bibitem[Bertrand et~al.(2004)Bertrand, Segura, Guti\'{e}rrez, and
  V\'{a}squez]{Bertrand_etal:2004}
Arnaud Bertrand, Marceliano Segura, Mariano Guti\'{e}rrez, and Luis
  V\'{a}squez.
\newblock From small-scale habitat loopholes to decadal cycles: a habitat-based
  hypothesis explaining fluctuation in pelagic fish populations off {Peru}.
\newblock \emph{Fish and Fisheries}, 5:\penalty0 296--316, 2004.

\bibitem[Bertsekas and Rhodes(1971)]{bertsekas71minimax}
D.~Bertsekas and I.~Rhodes.
\newblock On the minimax reachability of target sets and target tubes.
\newblock \emph{Automatica}, 7:\penalty0 233--247, 1971.

\bibitem[Chapel et~al.(2008)Chapel, Deffuant, Martin, and
  Mullon]{Chapel-Deffuant-Martin-Mullon:2008}
Laetitia Chapel, Guillaume Deffuant, Sophie Martin, and Christian Mullon.
\newblock Defining yield policies in a viability approach.
\newblock \emph{Ecological Modelling}, 212\penalty0 (1-2):\penalty0 10 -- 15,
  2008.

\bibitem[Checkley et~al.(2009)Checkley, Alheit, Ooseki, and Roy]{Checkley:2009}
David~M. Checkley, J{\"{u}}rgen Alheit, Yoshioki Ooseki, and Claude Roy.
\newblock \emph{Climate Change and Small Pelagic Fish}.
\newblock Cambridge University Press, 2009.

\bibitem[Clark(1990)]{Clark:1990}
C.~W. Clark.
\newblock \emph{Mathematical Bioeconomics}.
\newblock Wiley, New York, second edition, 1990.

\bibitem[Clarke et~al.(1995)Clarke, Ledayev, Stern, and
  Wolenski]{Clarkeetal:1995}
F.~H. Clarke, Y.~S. Ledayev, R.~J. Stern, and P.~R. Wolenski.
\newblock Qualitative properties of trajectories of control systems: a survey.
\newblock \emph{Journal of Dynamical Control Systems}, 1:\penalty0 1--48, 1995.

\bibitem[{De Lara} and Doyen(2008)]{DeLara-Doyen:2008}
M.~{De Lara} and L.~Doyen.
\newblock \emph{Sustainable Management of Natural Resources. Mathematical
  Models and Methods}.
\newblock Springer-Verlag, Berlin, 2008.

\bibitem[{De Lara} and Martinet(2009)]{DeLara-Martinet:2009}
M.~{De Lara} and V.~Martinet.
\newblock Multi-criteria dynamic decision under uncertainty: A stochastic
  viability analysis and an application to sustainable fishery management.
\newblock \emph{Mathematical Biosciences}, 217\penalty0 (2):\penalty0 118--124,
  February 2009.

\bibitem[{D}e {Lara} et~al.(2007){D}e {Lara}, Doyen, Guilbaud, and
  {R}ochet]{DeLara-Doyen-Guilbaud-Rochet_IJMS:2007}
M.~{D}e {Lara}, L.~Doyen, T.~Guilbaud, and M.-J. {R}ochet.
\newblock {Is a management framework based on spawning-stock biomass indicators
  sustainable? A viability approach}.
\newblock \emph{ICES J. Mar. Sci.}, 64\penalty0 (4):\penalty0 761--767, 2007.

\bibitem[Doyen and {De Lara}(2010)]{Doyen-DeLara:2010}
L.~Doyen and M.~{De Lara}.
\newblock Stochastic viability and dynamic programming.
\newblock \emph{Systems and Control Letters}, 59\penalty0 (10):\penalty0
  629--634, October 2010.

\bibitem[Eisenack et~al.(2006)Eisenack, Sheffran, and
  Kropp]{Eisenack-Sheffran-Kropp:2006}
K.~Eisenack, J.~Sheffran, and J.~Kropp.
\newblock The viability analysis of management frameworks for fisheries.
\newblock \emph{Environmental Modeling and Assessment}, 11\penalty0
  (1):\penalty0 69--79, February 2006.

\bibitem[FAO(1999)]{FAO:1999}
FAO.
\newblock Indicators for sustainable development of marine capture fisheries.
\newblock FAO Technical Guidelines for Responsible Fisheries~8, FAO, 1999.
\newblock 68~pp.

\bibitem[Garcia et~al.(2003)Garcia, Zerbi, Aliaume, Chi, and
  Lasserre]{Garcia-Zerbi-Aliaume-DoChi-Lasserre:2003}
S.~Garcia, A.~Zerbi, C.~Aliaume, T.~Do Chi, and G.~Lasserre.
\newblock The ecosystem approach to fisheries. {Issues}, terminology,
  principles, institutional foundations, implementation and outlook.
\newblock \emph{FAO Fisheries Technical Paper}, 443\penalty0 (71), 2003.

\bibitem[Hall(1988)]{Hall:1988}
C.~Hall.
\newblock An assessment of several of the historically most influential
  theoretical models used in ecology and of the data provided in their support.
\newblock \emph{Ecological Modelling}, 43\penalty0 (1-2):\penalty0 5--31, 1988.

\bibitem[Hollowed et~al.(2000)Hollowed, Bax, Beamish, Collie, Fogarty,
  Livingston, Pope, and
  Rice]{Hollowed-Bax-Beamish-Collie-Fogarty-Livingston-Pope-Rice:2000}
Anne~B. Hollowed, Nicholas Bax, Richard Beamish, Jeremy Collie, Michael
  Fogarty, Patricia Livingston, John Pope, and Jake~C. Rice.
\newblock {Are multispecies models an improvement on single-species models for
  measuring fishing impacts on marine ecosystems?}
\newblock \emph{ICES J. Mar. Sci.}, 57\penalty0 (3):\penalty0 707--719, 2000.

\bibitem[ICES(2004)]{ICES:2004}
ICES.
\newblock Report of the {ICES} advisory committee on fishery management and
  advisory committee on ecosystems, 2004.
\newblock {ICES} {Advice}, 1, ICES, 2004.
\newblock 1544 pp.

\bibitem[IMARPE(2000)]{IMARPE:2000}
IMARPE.
\newblock Trabajos expuestos en el taller internacional sobre la anchoveta
  peruana ({TIAP}), 9-12 {Mayo} 2000.
\newblock \emph{Bol. Inst. Mar Peru}, 19:\penalty0 1--2, 2000.

\bibitem[IMARPE(2004)]{IMARPE:2004}
IMARPE.
\newblock Report of the first session of the international panel of experts for
  assessment of {Peruvian} hake population. {March} 2003.
\newblock \emph{Bol. Inst. Mar Peru}, 21:\penalty0 33--78, 2004.

\bibitem[Katz et~al.(2003)Katz, Zabel, Harvey, Good, and
  Levin]{Katz-Zabel-Harvey-Good-Levin:2003}
Stephen~L. Katz, Richard Zabel, Chris Harvey, Thomas Good, and Phillip Levin.
\newblock Ecologically sustainable yield.
\newblock \emph{American Scientist}, 91\penalty0 (2):\penalty0 150, March-April
  2003.

\bibitem[Larkin(1977)]{Larkin:1977}
P.~A. Larkin.
\newblock {An Epitaph for the Concept of Maximum Sustained Yield}.
\newblock \emph{Transactions of the American Fisheries Society}, 106\penalty0
  (1):\penalty0 1--11, January 1977.

\bibitem[Mullon et~al.(2004)Mullon, Cury, and Shannon]{Mullonetal:2004}
C.~Mullon, P.~Cury, and L.~Shannon.
\newblock Viability model of trophic interactions in marine ecosystems.
\newblock \emph{Natural Resource Modeling}, 17:\penalty0 27--58, 2004.

\bibitem[Murray(2002)]{Murray:2002}
J.~D. Murray.
\newblock \emph{Mathematical Biology}.
\newblock Springer-Verlag, Berlin, third edition, 2002.

\bibitem[Pauly and Palomares(1989)]{Pauly-Palomares:1989}
Daniel Pauly and Mar{\'\i}a~Lourdes Palomares.
\newblock \emph{New estimates of monthly biomass, recruitment and related
  statistics of anchoveta (\emph{Engraulis ringens}) off {Peru}
  (4-14$\,^{\circ}\mathrm{S}$), 1953-1985}, pages 189--206.
\newblock 18. ICLARM Conference Proceedings, 1989.

\bibitem[PRODUCE(2005)]{PRODUCE:2005}
PRODUCE.
\newblock Establecen r\'egimen provisional de pesca del recurso merluza
  correspondiente al a\~no 2006.
\newblock \emph{El Peruano}, page 307804, 30 de diciembre 2005.
\newblock RM-356-2005-PRODUCE.

\bibitem[PRODUCE(2006)]{PRODUCE:2006a}
PRODUCE.
\newblock Establecen r\'egimen provisional de pesca del recurso merluza
  correspondiente al a\~no 2007.
\newblock \emph{El Peruano}, page 335485, 27 de diciembre 2006.
\newblock RM-357-2006-PRODUCE.

\bibitem[Rapaport et~al.(2006)Rapaport, Terreaux, and
  Doyen]{Rapaport-Terreaux-Doyen:2006}
A.~Rapaport, J.-P. Terreaux, and L.~Doyen.
\newblock Sustainable management of renewable resource: a viability approach.
\newblock \emph{Mathematics and Computer Modeling}, 43\penalty0 (5-6):\penalty0
  466--484, March 2006.

\bibitem[Saint-Pierre(1994)]{Stpierre:1994}
P.~Saint-Pierre.
\newblock Approximation of viability kernel.
\newblock \emph{Applied Mathematics and Optimization}, 29:\penalty0 187--209,
  1994.

\bibitem[Schaefer(1954)]{Schaefer:1954}
M.~B. Schaefer.
\newblock Some aspects of the dynamics of populations important to the
  management of commercial marine fisheries.
\newblock \emph{Bulletin of the Inter-American tropical tuna commission},
  1:\penalty0 25--56, 1954.

\bibitem[Sun et~al.(2001)Sun, Chiang, Liu, and Chang]{Sun_etal:2001}
Chin-Hwa Sun, Fu-Sung Chiang, Te-Shi Liu, and Ching-Cheng Chang.
\newblock A welfare analysis of {El Ni{\~n}o} forecasts in the international
  trade of fish meal - an application of stochastic spatial equilibrium model.
\newblock 2001 Annual meeting, August 5-8, Chicago, IL 20770, American
  Agricultural Economics Association (New Name 2008: Agricultural and Applied
  Economics Association), 2001.

\bibitem[Tam et~al.(2006)Tam, Purca, Duarte, Blaskovic, and Espinoza]{Tam:2006}
J.~Tam, S.~Purca, L.~O. Duarte, V.~Blaskovic, and P.~Espinoza.
\newblock Changes in the diet of hake associated with {El Ni{\~n}o} 1997-1998
  in the {Northern Humboldt Current} ecosystem.
\newblock \emph{Advances in Geosciences.}, 6:\penalty0 63--67, 2006.

\end{thebibliography}
\end{document}